\newtheorem{theorem}{Theorem}
\newtheorem{definition}{Definition}
\newtheorem{assumption}{Assumption}
\title{Safely Optimizing Highway Traffic with Robust Model Predictive Control-based Cooperative Adaptive Cruise Control}
\author{Carlos M. Massera, ~\IEEEmembership{Student Member,~IEEE,} Marco H. Terra, ~\IEEEmembership{Member,~IEEE,} Denis F. Wolf, ~\IEEEmembership{Member,~IEEE,}%
\thanks{Carlos M. Massera and Denis F. Wolf are with the Institute of Mathematics and Computer Science of University of São Paulo.
\{massera, denis \}@icmc.usp.br}%
\thanks{Marco H Terra is with the São Carlos School of Engineering of University of São Paulo.
terra@sc.usp.br}
\thanks{This work was funded by FAPESP on the grant 2013/24542-7.}}%
\begin{document}
\maketitle

\begin{abstract}
Road traffic crashes have been the leading cause of death among young people. Most of these accidents occur when the driver becomes distracted due to fatigue or external factors. Vehicle platooning systems such as Cooperative Adaptive Cruise Control (CACC) are one of the results of the effort devoted to the development of technologies for decreasing the number of road crashes and fatalities. Previous studies have suggested such systems improve up to 273\% highway traffic throughput and fuel consumption in more than 15\% if the clearance between vehicles in this class of roads can be reduced to 2 meters. This paper proposes an approach that guarantees a minimum safety distance between vehicles taking into account the overall system delays and braking capacity of each vehicle. A $l\infty$-norm Robust Model Predictive Controller (RMPC) is developed to guarantee the minimum safety distance is not violated due to uncertainties on the lead vehicle behavior. A formulation for a lower bound clearance of vehicles inside a platoon is also proposed. Simulation results show the performance of the proposed approach compared to a nominal controller when the system is subject to both modeled and unmodeled disturbances.
\end{abstract}

\section{Introduction} 

Road traffic crashes are the leading cause of death among young people between 10 and 24 years old \cite{world2007youth}. Most of these accidents occur when the driver is unable to maintain the vehicle control due to fatigue or external factors \cite{national2008national}. In recent years, both academia and industry have been devoted towards the development of safety systems for decreasing the number of road accidents. Vehicle platooning systems, such as Adaptive Cruise Control (ACC) \cite{1622978} and Cooperative Adaptive Cruise Control (CACC) \cite{6246707}, are examples of this class of systems through which a vehicle regulates its own speed and distance to the vehicle ahead based on a sensor suite and wireless communication interfaces, respectively.
 
ACC systems have already reached consumer market through radar \cite{1622978} and camera \cite{gat2005monocular} technologies. Such interest in vehicle platooning has led to the development of vehicle-specific communication protocols, such as 802.11p \cite{jiang2008ieee} and DSRC \cite{xu2004vehicle} necessary for the development of CACC systems. Studies have shown that CACC can improve up to 273\% road traffic throughput and significant reductions to fuel consumption \cite{tsugawa2014results} given a high market penetration \cite{6093130, van2006impact, ngoduy2013instability}. However, such improvements rely on the assumption CACC technology will significantly reduce the clearance between vehicles. In this proposed scenario, any disturbance in the lead vehicle behavior could lead to an accident. Therefore, stability, performance robustness and robustness against violating the minimum distance between vehicles must be ensured.

This paper proposes a cooperative adaptive cruise controller based on $l\infty$-norm robust model predictive controller capable of rejecting uncertainties on the lead vehicle acceleration behavior and operate close to the minimum safety distance between vehicles. The main contributions of this work are the formulation of such CACC system and a novel formulation of the minimum safety distance between vehicles that incorporates the lead vehicle braking capacity.

The remainder of the paper is organized as follows: Section \ref{sec_related} discusses related work; Section \ref{sec_model} presents the system modeling; Section \ref{sec_constraints} describes the applications safety and comfort constraints; Section \ref{sec_controller} addresses the proposed controller formulation; Section \ref{sec_experiments} reports the experiments performed and Section \ref{sec_conclusion} provides the final remarks.

\begin{figure*}[ht]
\centering
\includegraphics[width=\textwidth]{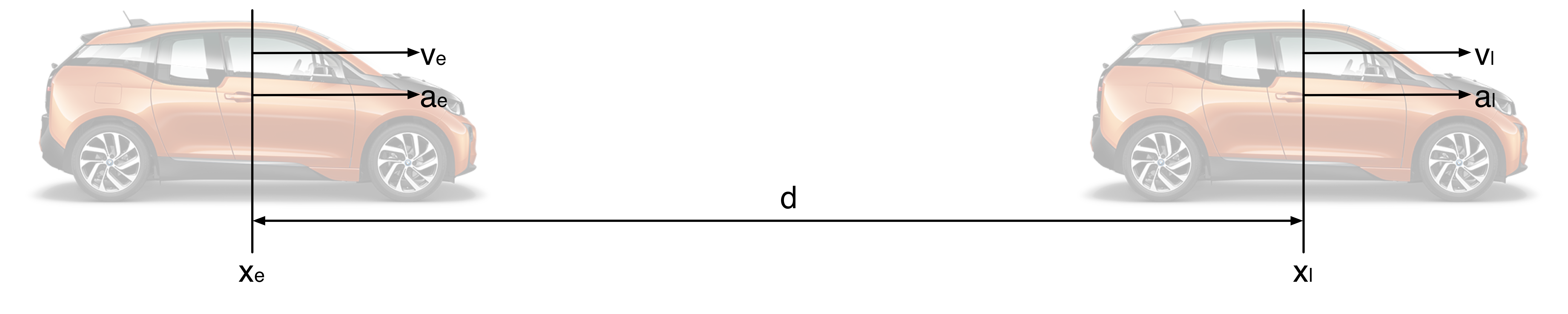}
\caption{Representation of the ego and lead vehicle models}
\label{fig_system_model}
\end{figure*}

\section{Related Work}
\label{sec_related}

During the 1980s and 1990s, both public and private programs started funding research in intelligent transportation systems based on fully or partially automated vehicle systems. The main focus was on improvements for highway throughput and vehicle safety through the creation of Automated Highway Systems (AHS) \cite{bender1969study}.

AHS research was nearly abandoned in the 2000s when studies changed focus from infrastructure-dependent to infrastructure-independent automation, which resulted in the current intelligent and autonomous vehicle research scenario. Commercially available systems, such as Adaptive Cruise Control (ACC) \cite{1622978} and Forward Collision Warning (FCW) \cite{gat2005monocular} have focused mostly on comfort, while the safety of the vehicle is still the driver's responsibility.

The development of systems based on vehicle-to-vehicle (V2V) and vehicle-to-infrastructure (V2I) communications have drawn growing interest from the research community due to improvements in communication bandwidth and processing power. Studies on automated infrastructures reappeared and culminated in the Grand Cooperative Driving Challenge (GCDC) \cite{guvenc2012cooperative_int} held in 2011, in which vehicles had to perform highway platooning tasks using embedded sensors and communications among them \cite{6246707}. The Karlsruher Institut für Technologie (KIT) won the competition with the AnnieWay project. A Linear Quadratic Regulator (LQR)-based controller was designed using all preceding communicating vehicles \cite{6246707}. However, the approach proposed by this project focused only on the headway time disregarding other safety requirements.

Tsugawa described results from heavy truck CACC studies performed in closed tracks in Japan, in 2014 \cite{tsugawa2014results}. The author developed a lateral and longitudinal automated platoon of three heavy trucks and demonstrated fuel savings decreased when the clearance between vehicles on the platoon increased for the first two vehicles, while the fuel savings were constant for the third vehicle for clearances between 2 and 20 meters. Therefore, the technical possibilities of decreasing the distance while maintaining vehicle safety must be investigated, specially for a market adoption phase when most platoons will contain a small number of vehicles. Besides the technical factors, drivers and passengers must feel comfortable with small gaps \cite{nowakowski2010cooperative}. 

Stanger and del Re \cite{stanger2013model} proposed the use of Model Predictive Control (MPC) for the explicit incorporation of fuel consumption optimization in the controller design. They penalized the approximate fuel consumption based on a nonlinear static fuel consumption map. The MPC approach also enabled the use of explicit constraints to represent mechanical limits of the vehicle. Several other studies \cite{kreuzen2012cooperative, sancar2014mpc, lang2014predictive} have proposed other linear and nonlinear MPC approaches to account for several preceding vehicles speed profiles and to optimize fuel consumption.

Safety envelope boundaries are easily corrupted by disturbances when vehicles cruise with small clearances. Therefore, robust or stochastic techniques should be used for the avoidance of violation of safety restrictions. Robust constrained optimal control techniques were proposed by Corona et al. \cite{corona2006robust}, who studied the use of Robust Hybrid MPC to deal with uncertainties of the piece-wise linear approximation of the powertrain and brake dynamics. However, they did not take into account uncertainties related to the lead vehicle behavior.

Moser et al. \cite{moser2015cooperative} designed a stochastic model of the driver behavior to improve fuel efficiency. The authors developed a conditional Gaussian graphical model to represent the probability distribution of a vehicle behavior in a prediction horizon of 15s combined with a Stochastic Model Predictive Control approach.

\section{Modeling}
\label{sec_model}

\begin{assumption}
The powertrain and brake dynamics of the controlled vehicle can be approximated by an inversible steady-state time-invariant model with no introduction of significant uncertainties to the system.
\label{ass_powertrain}
\end{assumption}

Assumption \ref{ass_powertrain} was experimentally validated in \cite{massera2014longitudinal}. Therefore, he powertrain and brake dynamics are abstracted from the model, reducing its complexity while maintaining performance.

The proposed controller was designed to regulate the vehicle distance, speed and acceleration considering only the immediate preceding (lead) vehicle. Therefore, a simplified kinematic model of the vehicle's longitudinal dynamics is proposed, described in Definitions \ref{def_system_model_1} and \ref{def_system_model_2}.

\begin{definition}
Let $ p_i(t) $, $ v_i(t) $ and $ a_i(t) $ respectively denote the position, velocity and acceleration of vehicle $i$. The vehicle dynamics are represented by%
\begin{equation}
\begin{matrix}
\dot{p}_i(t) &=& v_i(t)\\
\dot{v}_i(t) &=& a_i(t)
\end{matrix}
\end{equation}%
as shown in both vehicles of Figure \ref{fig_system_model}.
\label{def_system_model_1}
\end{definition}

\begin{definition}
Consider two adjacent vehicles on a platoon. Let the lead (preceding) vehicle states be denoted by subscript $ l $, the ego (succeeding) vehicle be denoted by a subscript $ e $ and $ d(t) = p_l(t) - p_e(t) $ be the distance between these vehicles. The relative dynamics are%
\begin{equation}
\begin{matrix}
\dot{d}  &=& v_l(t)  - v_e(t) \\
\ddot{d} &=& a_l(t)  - a_e(t).
\end{matrix}
\label{eq_system_distance}
\end{equation}
\label{def_system_model_2}
\end{definition}

Let the system state as $ x(t) = [d(t), v_l(t), v_e(t)]^T $ and the control input as $ u(t) = a_e(t) $. The system from Definitions \ref{def_system_model_1} and \ref{def_system_model_2} can be written as a linear affine discrete system of the form%
\begin{equation}
x_{k+1} = F x_k + G u_k + h
\end{equation}%
where, given a sampling time $ T_s $,%
\begin{equation}
F = \begin{bmatrix}
1 & T_s & -T_s\\
0 & 1 & 0\\
0 & 0 & 1
\end{bmatrix},\;
G = \begin{bmatrix}
-0.5 T_s^2\\0\\T_s
\end{bmatrix},\;
h = \begin{bmatrix}
0\\T_s a_{l,k}\\0
\end{bmatrix}.
\end{equation}

\section{Platooning Safety and Comfort Considerations}
\label{sec_constraints}

This section is divided into three parts - the first defines the minimum safety distance between two vehicles, the second investigates its impact to small clearance assumptions and the third addresses the comfort and safety restrictions the system must satisfy.

\subsection{Minimum Safety Distance}

Most economical benefits from CACC systems are related to the assumption this technology will enable vehicles to drive very close to each other. However, a worst-case representation of the minimum safety distance between vehicles is required to avoid compromising system safety and provide a lower bound to vehicle distances.

Approaches for Platoon safety can be categorized as centralized and decentralized. The centralized approach assigns the task of ensuring the safety of the platoon to one of its vehicles, typically its leader. Vehicles inside a platoon can operate in an unsafe region with respect to other vehicles while still maintaining global safety. The decentralized approach distributes the task of ensuring safety of the platoon to each vehicle contained in it. Therefore, every vehicle must ensure its safety with respect to preceding vehicles or, at least, the immediately preceding vehicle.

The minimum safety distance concept proposed in this paper focuses on the decentralized approach, in which every vehicle is responsible to ensure its safety with respect to its immediate preceding vehicle.

\begin{assumption}
The lead vehicle will never decelerate more than its claimed maximum braking capacity.
\label{ass_preceding_vehicle}
\end{assumption}

Assumption \ref{ass_preceding_vehicle} cannot be satisfied if the lead vehicle is involved in a collision, and further investigation is required for the removal of the assumption.

\begin{figure}[ht]
\centering
\includegraphics[width=\columnwidth]{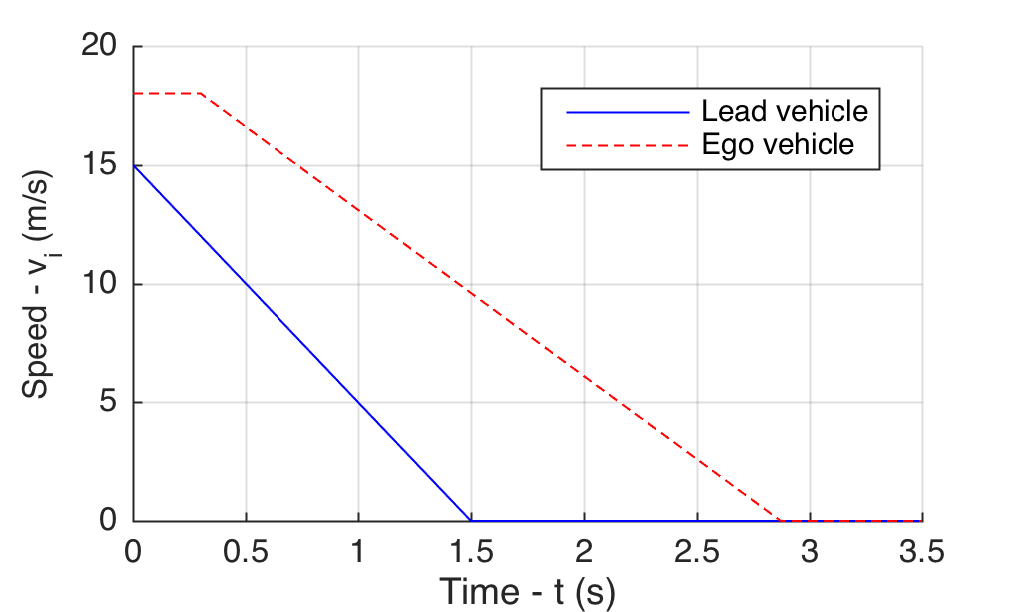}
\caption{Example of the vehicle speed profiles at an emergency situation of Definition \ref{def_min_safe_dist}, where the lead vehicle cruises at $15 m/s$ with a braking capacity $a_l^b = 10 m/s^2$ and an ego vehicle cruises at $18 m/s$ with a braking capacity $a_e^b = 7 m/s^2$}
\label{fig_safety_braking_speed}
\end{figure}

\begin{definition}
Let $ \phi $ be the sum of worst-case delays from communication, processing and actuation. Consider the system model from Definition \ref{def_system_model_2} where both the lead and ego vehicles respectively brake at their maximum braking capacity $ a_l^b $ and $ a_e^b $ at time $ t $ and $ t + \phi $ until both reach complete stop. $ d_{safe}(t) \in \mathds{R} $ is said to be the minimum safety distance if%
\begin{equation}
d(t) \ge d_{safe}(t) \Rightarrow \forall \epsilon \in \mathds{R}_{\ge 0}. \; d(t + \epsilon) \ge 0
\end{equation}%
with%
\begin{equation}
\dot{v}_l(\tau) = a_l(\tau) = \left\{ \begin{matrix}
- a_l^b &,& 0 \le \tau - t \le \frac{v_l(t)}{a_l^b}\\
0 &,& otherwise
\end{matrix} \right. ,
\end{equation}%
and%
\begin{equation}
\dot{v}_e(\tau) = a_e(\tau) = \left\{ \begin{matrix}
- a_e^b &,& 0 \le \tau - t - \psi \le \frac{v_e(t)}{a_e^b}\\
0 &,& otherwise.
\end{matrix} \right. .
\end{equation}%
Therefore, $ d_{safe} $ is given by%
\begin{equation}
\begin{matrix*}[l]
d_{safe} =& \underset{d}{\min} & d \\
& s.t. & d + \underset{t \le \epsilon \le \infty}{\min} \left\{ \int_t^{\epsilon} v_l(\tau) - v_e(\tau) d\tau \right\} \ge 0
\end{matrix*}
\label{eq_safety_distance_int}
\end{equation}
\label{def_min_safe_dist}
\end{definition}

Figure \ref{fig_safety_braking_speed} shows an example of the vehicles speed profiles for Definition \ref{def_min_safe_dist}.

\begin{theorem}
Consider Definition \ref{def_min_safe_dist}. $ d_{safe}$ has a closed form solution%
\begin{equation}
d_{safe} = \left\{ \begin{matrix*}[l]
d_{safe}^{lo} &, \epsilon \in [t + \phi, t + t_f^{min}) \\ 
\max\left(0, d_{safe}^{ub} \right)  &, otherwise 
\end{matrix*}\right.
\label{eq_minimum_safety_distance}
\end{equation}%
where%
\begin{equation}
\begin{matrix*}[l]
\epsilon^* &= \frac{v_e(t) - v_l(t) + a_e^b \phi}{a_e^b - a_l^b}\\
d_{safe}^{ub} &= v_e(t) \phi + \frac{v_e(t)^2}{a_e^b} - \frac{v_l(t)^2}{a_l^b}\\
d_{safe}^{lo} &= (v_e(t) - v_l(t)) \epsilon^* - (a_e^b - a_l^b) \frac{\epsilon^{*2}}{2} + a_e^b \frac{\phi^2}{2}.
\end{matrix*}
\end{equation}
\label{the_safe_distance}
\end{theorem}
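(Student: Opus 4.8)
The plan is to convert the optimization in \eqref{eq_safety_distance_int} into an explicit one-dimensional minimization of the accumulated relative displacement and then locate its minimizer by a phase-by-phase analysis of the emergency braking manoeuvre. Writing $\Delta(\epsilon) := \int_t^{\epsilon} \left( v_l(\tau) - v_e(\tau) \right) d\tau$, the inner constraint of \eqref{eq_safety_distance_int} reads $d + \min_{\epsilon \ge t} \Delta(\epsilon) \ge 0$, and since $d_{safe}$ is the smallest $d$ for which it holds, the problem collapses to $d_{safe} = -\min_{\epsilon \ge t} \Delta(\epsilon)$. Because the braking laws of Definition~\ref{def_min_safe_dist} make both velocities continuous and piecewise affine in $\tau$, the map $\Delta$ is continuous, piecewise quadratic, and differentiable with $\Delta'(\epsilon) = v_l(\epsilon) - v_e(\epsilon)$; hence its global minimum is attained either where the relative velocity changes sign from negative to positive or at the instant both vehicles come to rest, after which $\Delta' \equiv 0$.

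Next I would enumerate the phases of the manoeuvre and track the sign of $\Delta'$ in each: (i) $t \le \epsilon \le t+\phi$, where the lead brakes while the ego still cruises; (ii) $t+\phi \le \epsilon < t + t_f^{min}$, where both brake, with $t + t_f^{min}$ the instant the first of the two vehicles halts; (iii) the interval on which only one vehicle is still moving; and (iv) both at rest. On phase (ii) the relative velocity is affine with slope $a_e^b - a_l^b$, so the unique candidate stationary point solves $v_l(\epsilon) = v_e(\epsilon)$, which is exactly the $\epsilon^*$ of the statement. The crucial observation is that an interior minimizer of $\Delta$ can live only here: in phase (i) the relative velocity is monotone and a zero crossing would be a maximum rather than a minimum, while in phase (iii) $\Delta'$ equals either $-v_e(\epsilon) \le 0$ or $v_l(\epsilon) \ge 0$ and moves monotonically toward zero, so it cannot turn from negative to positive.

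I would then split into the two regimes that produce the two branches of \eqref{eq_minimum_safety_distance}. If $\epsilon^* \in [t+\phi, t+t_f^{min})$, the relative velocity genuinely crosses zero inside the both-braking phase; integrating $v_l - v_e$ from $t$ up to $\epsilon^*$ — the piece over $[t,t+\phi]$ plus the piece over $[t+\phi,\epsilon^*]$ — and negating yields, after collecting terms, the expression $d_{safe}^{lo}$. Otherwise $\Delta'$ keeps a single sign until both vehicles stop, so the minimum of $\Delta$ sits at the common rest instant; there the accumulated relative displacement equals the ego's total travelled distance (its cruising contribution $v_e(t)\phi$ during the delay plus its braking distance) minus the lead's braking distance, and negating gives $d_{safe}^{ub}$. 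Finally, since $\Delta(t)=0$ the minimization always returns $d_{safe} \ge 0$; when $d_{safe}^{ub}$ comes out negative the ego never actually reduces the gap and the binding instant is the start $\epsilon=t$, which is precisely the floor $\max(0,d_{safe}^{ub})$.

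The two definite integrals are routine bookkeeping; the part demanding care is the phase argument that certifies the case split is exhaustive — ruling out interior minimizers in phases (i) and (iii), checking junction continuity of $\Delta'$ so that no minimizer hides at a kink between phases, and confirming that when $\epsilon^*$ falls outside the both-braking window the relative velocity never returns to positive before both vehicles halt. I would also verify that whichever vehicle stops first is correctly encoded by $t_f^{min}$, so the branch boundary in \eqref{eq_minimum_safety_distance} is tight.
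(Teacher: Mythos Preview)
Your proposal is correct and follows essentially the same route as the paper: both reduce \eqref{eq_safety_distance_int} to a one-dimensional extremization of the piecewise-quadratic accumulated relative displacement, then locate the optimizer by a phase-by-phase calculus argument yielding the three candidates $0$, $d_{safe}^{ub}$, and $d_{safe}^{lo}$. The only cosmetic difference is that the paper identifies the admissible phase for an interior extremum by enumerating the sign of the Hessian $\ddot J_{dist}$, whereas you argue directly from the sign pattern of the first derivative $\Delta'=v_l-v_e$; the conclusions coincide.
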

\begin{proof}

Let $ t_f^e = \phi + v_e(t) / a_e^b $, $ t_f^l = v_l(t) / a_l^b $, $ t_f^{min} = \min\left(t_f^e, t_f^l\right) $ and $ t_f^{max} = \max\left(t_f^e, t_f^l\right) $ for brevity. Since \eqref{eq_safety_distance_int} shows a linear minimization with a lower bound%
\begin{equation}
d_{safe} = \underset{t \le \epsilon \le \infty}{\max} \left\{ \int_t^{\epsilon} v_e(\tau) - v_l(\tau) d\tau \right\}.
\label{eq_safe_dist_open_form}
\end{equation}%
Both $ v_e(t) $ and $ v_l(t) $ are continuous piecewise linear functions, therefore $ \delta v(t) = v_e(t) - v_l(t) $ is also piecewise linear and%
\begin{equation}
J_{dist}(\epsilon) = \int_{t}^{\epsilon} \delta v(\tau) d\tau 
\end{equation}%
is a twice-differentiable piecewise quadratic function. It is also worth noting that%
\begin{equation}
J_{dist}(\epsilon) = J_{dist}(\infty)\;, \forall \epsilon > t + t_f^{max}.
\end{equation}

Since the maximization problem is not always concave but has a small number of possible optimal solutions, the optimization can be expressed analytically through enumeration \cite{lofberg2003minimax}. Three possible positions for the constrained global maximum exists: The lower optimization bound, the upper optimization bound and a local maximum inside the feasible set. The lower bound case can be trivially obtained as $ d_{safe}^{lb} = 0 $, while the upper bound case is%
\begin{equation}
\begin{matrix*}[l]
d_{safe}^{ub} &= J_{dist}(\infty) \\
&= \int_t^{\infty} \delta v(\tau) d\tau \\
&= v_e(t) \phi + \frac{v_e(t)^2}{a_e^b} - \frac{v_l(t)^2}{a_l^b}.
\end{matrix*}
\end{equation}%

The latter case requires the enumeration  of all $ \{\epsilon \mid \dot{J}_{dist}(\epsilon) = 0,\;\ddot{J}_{dist}(\epsilon) < 0\}$. The Hessian of $ J_{dist}(\epsilon) $ can be expressed as%
\begin{equation}
\ddot{J}_{dist}(\epsilon)= \left\{ \begin{matrix*}[l]
a_l^b &, 0 \le \epsilon - t < \phi\\
a_l^b - a_e^b &,\phi \le \epsilon - t < t_f^{min}\\
a_l^b &, t_f^{min} \le \epsilon - t < t_f^{max}, \;t_f^l > t_f^e\\
- a_e^b &, t_f^{min} \le \epsilon - t < t_f^{max}, \; t_f^e >= t_f^l\\
0 &, t_f^{max} < \epsilon - t
\end{matrix*} \right.
\end{equation}%
which it is negative only for $ \epsilon \in \{\epsilon \mid \epsilon \in [t + \phi, t + t_f^{min}),\;a_e^b > a_l^b \} $ or $ \epsilon \in \{\epsilon \mid \epsilon \in [t + t_f^{min}, t + t_f^{max}),\; t_f^e >= t_f^l \} $. However, $ \dot{J}_{dist}(\epsilon) \neq 0\; \forall \epsilon \in [t + t_f^{min}, t + t_f^{max}) $, therefore $ \ddot{J}_{dist}(\epsilon) $ has a local maximum in $ \epsilon^* $ only if $ a_e^b > a_l^b $ and $ \dot{J}_{dist}(\epsilon^*) = 0 $, which can be expanded as
\begin{equation}
v_e(t) - a_e^b (\epsilon^* - \phi) = v_l(t) - a_l^b \epsilon^*
\end{equation}%
and results%
\begin{equation}
\epsilon^* = \frac{v_e(t) - v_l(t) + a_e^b \phi}{a_e^b - a_l^b} \iff \epsilon^* \in [t + \phi, t + t_f^{min})
\end{equation}
The local optima minimum safety distance can be expressed as%
\begin{equation}
\begin{matrix*}[l]
d_{safe}^{lo} &= J_{dist}(\epsilon^*) \\
&= \int_t^{\epsilon^*} \delta v(\tau) d\tau \\
&= (v_e(t) - v_l(t)) \epsilon^* - (a_e^b - a_l^b) \frac{\epsilon^{*2}}{2} + a_e^b \frac{\phi^2}{2}
\end{matrix*}.
\end{equation}

Finally $ d_{safe} $ is the enumeration of all previous possible maximums, which results%
\begin{equation}
d_{safe} = \left\{ \begin{matrix*}[l]
d_{safe}^{lo} &, \epsilon \in [t + \phi, t + t_f^{min}) \\ 
\max\left(0, d_{safe}^{ub} \right)  &, otherwise. 
\end{matrix*}\right.
\end{equation}
\end{proof}

The safety distance concept presented in Theorem \ref{the_safe_distance} is conservative since it does not take into account any other vehicle besides the immediately preceding one. However, it is also the most robust since it guarantees not only the platoon safety, but also the safety of any of its contiguous subsets.

\subsection{Impact of Minimum Safety Distance on Cooperative Adaptive Cruise Control Performance}

Consider both vehicle velocities are equal and in steady steady-state, $ a_e^b = a_l^b = 9 m/s^2 $ and $ \phi = 0.27s $. The minimum safe distance is $ 9.72 m $ for $ v_e(t) = v_l(t) = 35 m/s $ ($126 km/h$) and $ 6.94 m $ for $ v_e(t) = v_l(t) = 25 m/s $ ($90km/h$), as shown in Figure \ref{fig_minimum_distance_curve}.

\begin{figure}[ht]
\centering
\includegraphics[width=\columnwidth]{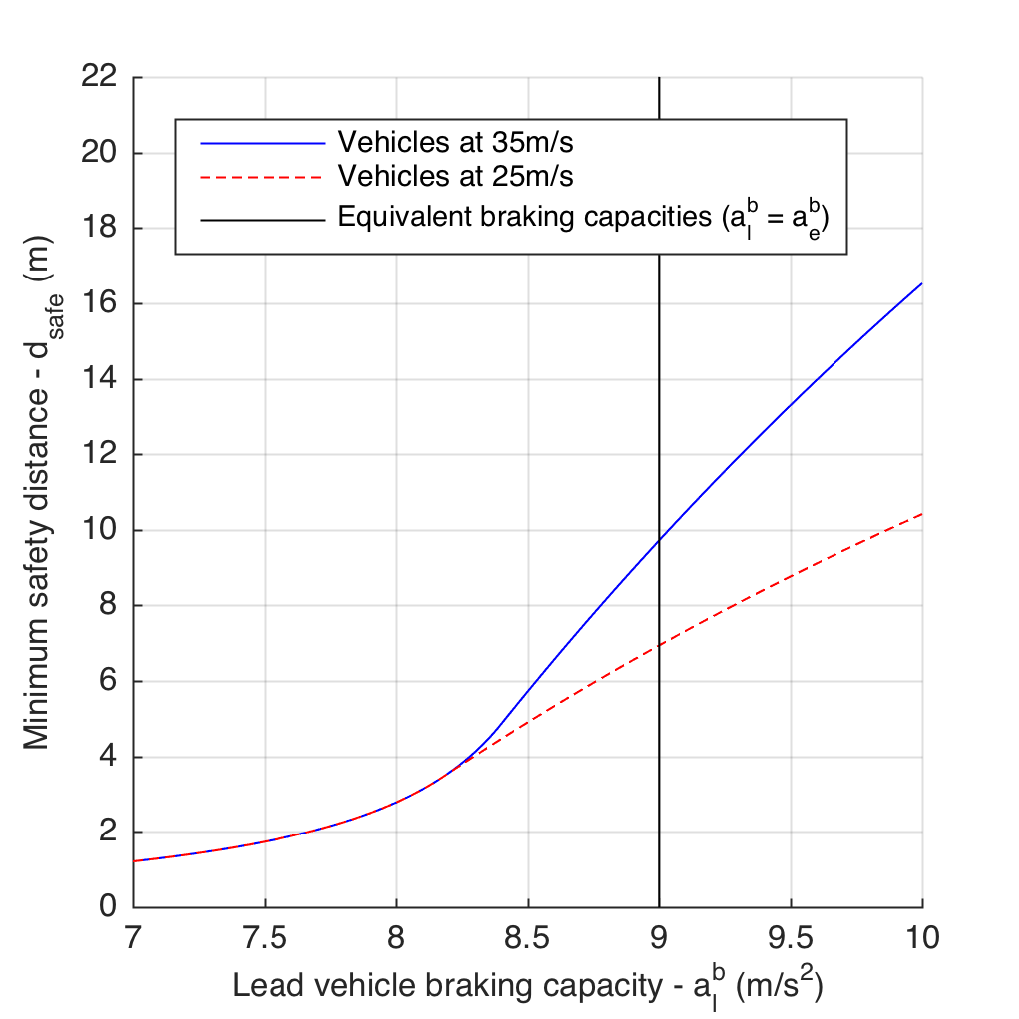}
\caption{The minimum distance in function of the lead vehicle braking capacity for a ego vehicle braking capacity $ a_e^b = 9 m/s^2 $ and both vehicles cruising at $ 35 m/s $ (blue line) and $ 25 m/s $ (red dashed line)}
\label{fig_minimum_distance_curve}
\end{figure}

The minimum safety distance obtained is superior to the distances investigated in previous studies (such as \cite{6093130} and \cite{tsugawa2014results}), where vehicles operated down to $ 2m $ clearances at highway speeds. For the achievement of such clearance, the overall delay required for vehicles with similar braking capacity would be $ 80 ms $ for $ 25m/s $ and $ 57 ms $ for $ 35 m/s $. The required delays are not consistent with state-of-the-art actuators and wireless transmission systems, since current DSRC systems have a worst-case delay of $ 22ms $ on benchmarks \cite{5940479} and braking systems take up to $ 100 ms $ to achieve the commanded pressure. However, there are still significant benefits for CACC systems capable of operating near the minimum safety distance, since \cite{tsugawa2014results} presented an average fuel saving of $13\%$ for heavy trucks at $10m$ clearances.

Sorting the vehicles within the platoon from the least braking capacity (e.g. a heavy truck with a trailer) to the highest braking capacity (e.g. a sport car) would decrease the overall platoon clearance and maintain safety guarantees. However, such sorting would result in situations where the vehicle with the slowest braking response becomes the platoon leader and is entitled of the mitigation of all forward external emergencies. Therefore, there is a trade-off between the minimization of the clearance inside a platoon and the robustness guarantees against external incidents.

\subsection{Safety and Comfort Constraints}

The following two factors must be taken into consideration when techniques for any type of vehicle control are investigated: The vehicle must be safe at all times, and its behavior must be comfortable whenever possible.

Safety constraints must be met at all times, regardless of system disturbances, while comfort must be met whenever it does not compromise the safety of the vehicle. Therefore, four constraint sets were defined:

\begin{itemize}
\item Minimum safety distance set (safety);
\item Time-to-contact set (comfort);
\item Road speed limit set (safety),
\item Acceleration limits set (comfort).
\end{itemize}

\begin{figure}[ht]
\centering
\includegraphics[width=\columnwidth]{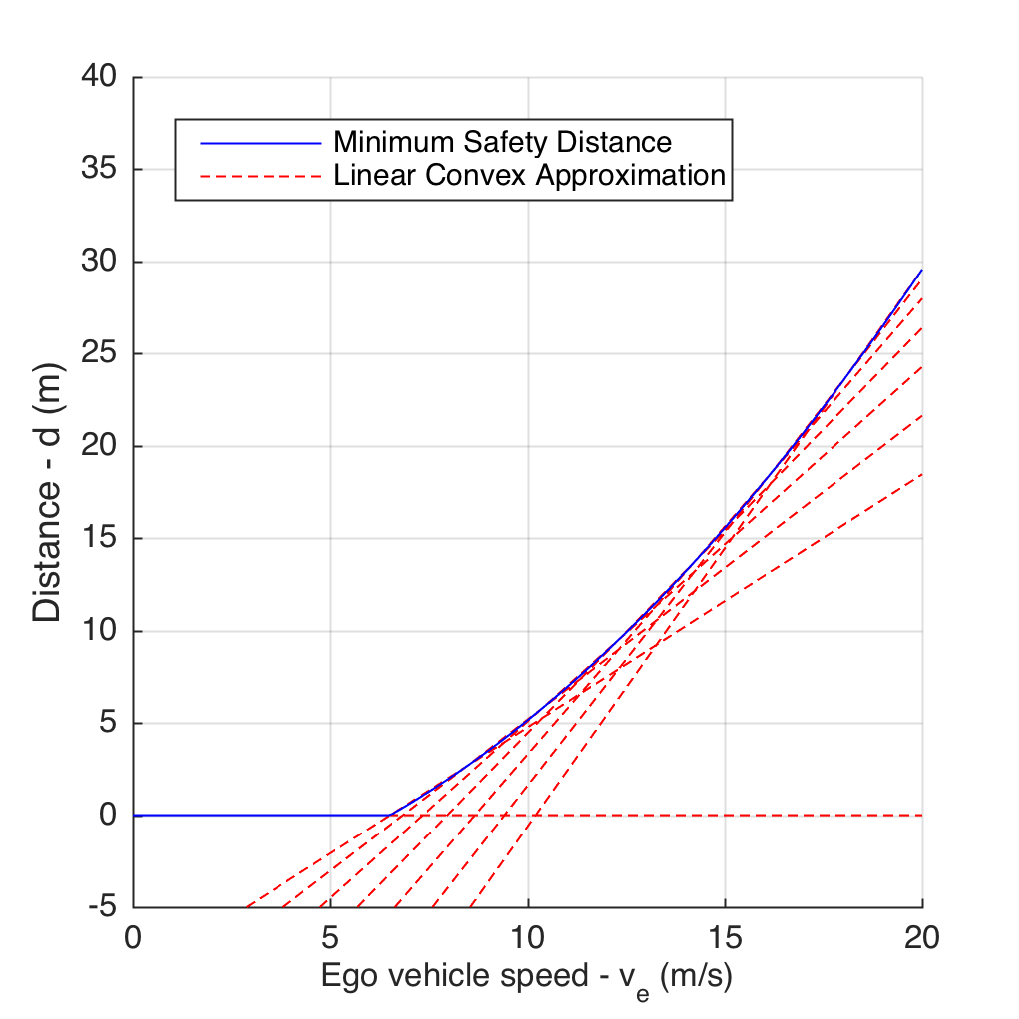}
\caption{Minimum safety distance constraint and its linear convex approximation for $ v_l = 10 m/s $, $ a_e^b = 7 m/s^2 $ and $ a_l^b = 10 m/s^2 $ }
\label{fig_safety_distance_constraint}
\end{figure}

The minimum safety distance set enforces the minimum distance to the lead vehicle, as defined in \eqref{eq_minimum_safety_distance}. This constraint is nonlinear, but convex on $ d(t) $ and $ v_e(t) $ and non-convex on the uncontrollable $ v_l(t) $. 

Let $ f\left(v_e(t), v_l(t)\right) : \mathds{R}^2 \rightarrow \mathds{R} $ define the minimum safety distance relation such that $ - d(t) + f\left(v_e(t), v_l(t)\right) \le 0 $ defines its constraint. A first0-order Taylor approximation for the lead vehicle speed $ v_l(t) \approx v_l(0) + t * a_l(0) $ yields%
\begin{equation}
g\left(t, v_e(t)\right) = f\left(v_e(t), v_l(0) + t * a_l(0) \right)
\end{equation}%
where $ g $ is a convex function that can be conservatively represented by a set of linear constraints, as shown in Figure \ref{fig_safety_distance_constraint}.

This set of linear constraints consists of one constraint for the linear region%
\begin{equation}
 - d(t) \le 0, \forall v_e(t) \in [0, v_{min}]
\end{equation}%
where%
\begin{equation}
v_{min} = \sqrt{\frac{v_l(t)^2 a_e^b}{a_l^b}}
\end{equation}%
and seven constraints between $ v_{min} $ and road speed limit $ v_{max} $ for the nonlinear region given by Taylor approximation of function $ g $ at a given point $ p_i = v_{min} + i (v_{max} - v_{min}) / 7, \forall i \in [0,7] $ %
\begin{equation}
- d(t) + g(t, p_i) + \frac{g(t, p_{i+1}) - g(t, p_{i})}{p_{i+1} - p_i} (v_e(t) - p_i) \le 0.
\end{equation}%
Since this constraint is dependent on $ t $, it must be defined once for each discrete step where the system is evaluated. The set will be compactly denoted as%
\begin{equation}
- \mathds{1}_{8} d_k + f_k + g_k v_{e,k} \le 0
\label{eq_constraint_1}
\end{equation}%
where $ d_k $ and $ v_{e,k} $ are the discrete counterparts of $ d(t) $ and $ v_e(t) $, respectively and $ \mathds{1}_{i} $ is an one-column vector of size $ i $.

The time-to-contact constraint avoids fast approximations to the lead vehicle that might be uncomfortable to the passengers.

\begin{definition}
Let time-to-contact $ t_c(t) $ be the instantaneous time required for a collision to occur between the lead and ego vehicle if no vehicle varies its speed.%
\begin{equation}
\begin{matrix*}[l]
t_c(t) =& \min & \tau\\
& s.t. & d(\tau) = 0\\
& & \dot{d}(\tau) = v_l(t) - v_e(t)\\
& & \tau \ge 0
\end{matrix*}
\end{equation}
\label{def_time_to_contact}%
which has a closed form solution%
\begin{equation}
t_c(t) = \max\left(\frac{d(t)}{v_{e}(t) - v_{l}(t)}, 0 \right).
\end{equation}
\end{definition}

\begin{assumption}
A minimum value of $ t_c $, denoted by $ t_{c,min} $, is considered comfortable by the driver and passengers of the ego vehicle.
\label{ass_min_ttc}
\end{assumption}

The time-to-contact constraint based on Definition \ref{def_time_to_contact} and Assumption \ref{ass_min_ttc} is%
\begin{equation}
t_{c,min} \le t_c(t) = \frac{d(i)}{v_{e}(i) - v_{l}(i)},\;\forall t \in \mathds{R}
\end{equation}%
which can be rewritten in the discrete canonical form as%
\begin{equation}
-d_k + t_{c,min} (v_{e,k} - v_{l,k}) \le 0.
\label{eq_constraint_2}
\end{equation}

The road speed limit constraint set avoids the vehicle exceeding the maximum speed on the road. Therefore, it can be trivially defined as%
\begin{equation}
0 \le v_{e,k} \le v_{max}.
\end{equation}

Finally, the acceleration constraint set limits the controller acceleration to what is achievable given the current friction limits and brake distribution and limits the vehicle acceleration to a comfortable region whenever possible. For the first role, the constraint can be defined in the canonical form as%
\begin{equation}
- a_{e,k} \le - a_e^b
\end{equation}%
while for the second, a slack variable $ s_{a,k} $ will be introduced for the creation of a $soft$ constraint, which results%
\begin{equation}
a_{min} \le a_{e,k} + s_{a,k} \le a_{max}
\end{equation}%
where $ a_{min} $ and $ a_{max} $ define the comfortable acceleration limits and%
\begin{equation}
x_k \leftarrow [x_k^T, s_{a,k}]^T
\end{equation}%
for optimization purposes.

All constraints presented will be represented by their generic form $ A_k x_k + B_k u_k + c_k \le 0 $, where%
\begin{equation}
\begin{matrix}
A_k = \begin{bmatrix*}[r]
0 & 0 & -1 & 0\\
0 & 0 & 1 & 0\\
-1 & t_{c,min} & -t_{c,min} & 0\\
0 & 0 & 0 & 0\\
0 & 0 & 0 & -1\\
0 & 0 & 0 & 1\\
-\mathds{1}_8 & 0 & g_k & 0
\end{bmatrix*}
\\ \\ B_k = \begin{bmatrix*}[r]
0\\
0\\
0\\
-1\\
-1\\
1\\
0
\end{bmatrix*}
\; c_k = \begin{bmatrix*}[r]
0\\
v_{max}\\
0\\
- a_e^b\\
- a_{min}\\
a_{max}\\
- f_k
\end{bmatrix*}.
\end{matrix}
\end{equation}%
\section{Proposed controller}
\label{sec_controller}

This Section addresses the formulation of the $l\infty $-norm robust optimal control based on the min-max approach and presents the pre-stabilizing nil-potent controller concept.

\subsection{$ l\infty $-norm Robust Optimal Control Formulation}

The optimization problem of an $ l\infty $-norm optimal receding horizon control formulation \cite{christophersen2006optimal} is given by%
\begin{equation}
\begin{matrix*}[l]
J^*(x_0) =& \underset{x,u}{\min} & \overset{T - 1}{\underset{k = 0}{\sum}} || Q x_k ||_{\infty} + || R u_k ||_{\infty} + || P_\infty x_{T} ||_{\infty}\\
& s.t. & x_{k+1} = F x_{k} + G u_{k} + h + W w_{k}\\
& & A_k x_{k} + B_k u_{k} + c_k \le 0\\
\end{matrix*}
\label{eq_opt_infty_norm}
\end{equation}%
where the range of $ k $ has been omitted for brevity, $T$ is the constrained horizon, $F \in \mathds{R}^{N_x \times N_x}$, $G \in \mathds{R}^{N_x \times N_u}$ and $h \in \mathds{R}^{N_x}$ represent the dynamics of an affine system, $W \in \mathds{R}^{N_x \times N_w}$ is the additive disturbance matrix, $A_k \in \mathds{R}^{N_c \times N_x}, \forall k \in [0,T] $, $B_k \in \mathds{R}^{N_c \times N_u}, \forall k \in [0,T]$ and $c_k \in \mathds{R}^{N_c}, \forall k \in [0,T]$ define a polytopic inequality constraint, $ Q \in \mathds{R}^{N_wx \times N_x} $ and $ R \in \mathds{R}^{N_wc \times N_u} $ are full lower rank real matrices and $ P_\infty \in \mathds{R}^{N_p \times N_x} $ is the cost matrix of the infinite horizon unconstrained $ l\infty $-norm problem%
\begin{equation}
\begin{matrix*}[l]
|| P_\infty x_{0} ||_{\infty} =& \underset{x,u}{\min} & \overset{\infty}{\underset{k = 0}{\sum}} || Q x_k ||_{\infty} + || R u_k ||_{\infty}\\
& s.t. & x_{k+1} = F x_{k} + G u_{k} + h
\end{matrix*}
\end{equation}

The problem in \eqref{eq_opt_infty_norm} can be represented as a Linear Programming (LP) problem based on a relaxation of the $l\infty$-norm cost functional through the addition of auxiliary variables $ \epsilon^x_k \in \mathds{R},\; \forall k \in [0,T] $ and $ \epsilon^u_k \in \mathds{R},\; \forall k \in [0,T-1] $, which yields%
\begin{equation}
\begin{matrix*}[l]
J^*(x_0) =& \underset{x,u}{\min} & \epsilon^x_T + \overset{T - 1}{\underset{k = 0}{\sum}} \epsilon^x_k + \epsilon^u_k\\
&\begin{matrix*}[l]
s.t.\\
\phantom{-}\\
\phantom{-}\\
\phantom{-}\\
\phantom{-}\\
\phantom{-}\\
\phantom{-}\\
\phantom{-}\\
\end{matrix*} 
&
\begin{matrix*}[l]
x_{k+1} = F x_{k} + G u_{k} + h + W w_{k}\\
A_k x_{k} + B_k u_{k} + c_k \le 0\\
Q_i x_k \le \epsilon^x_k\\
- Q_i x_k \le \epsilon^x_k\\
R_i u_k \le \epsilon^u_k\\
-R_i u_k \le \epsilon^u_k\\
(P_\infty)_i x_T \le \epsilon^x_T\\
-(P_\infty)_i x_T \le \epsilon^x_T\\
\end{matrix*}
\end{matrix*}
\label{eq_opt_infty_norm_lp}
\end{equation}%
where $ Q_i $, $ R_i $ and $ (P_\infty)_i $ are the $i$-th row of matrices $ Q $, $ R $ and $ P_\infty $, respectively.

The system should remain constrained and stable under any possible value of $ w \in \mathcal{W} $, where $ \mathcal{W} = \{ w \mid w \in \mathds{R}^{N}, \;|| w ||_\infty \le 1 \}$. Therefore, let $ \bar{x}_k \in \mathds{R}^{N_x} $ be the process state without disturbances. The representation of both $ x_{k} $ and $ \bar{x}_k $ in respect to a known current state $ x_0 = \bar{x}_0 $ results in%
\begin{equation}
\begin{matrix*}[l]
x_k &= F^{k-1} x_{0} + \overset{k - 1}{\underset{i = 0}{\sum}} F^{k - i - 1} (G u_i + h) + \overset{k - 1}{\underset{i = 0}{\sum}} F^{k - i - 1} W w_i, \\
\bar{x}_k &= F^{k-1} x_{0} + \overset{k - 1}{\underset{i = 0}{\sum}} F^{k - i - 1} (G u_i + h)
\end{matrix*}
\label{eq_certain_x}
\end{equation}%
and defining $ \bar{W}_{i} = F^{i-1} W $ for brevity, the relation between $ x_k $ and $ \bar{x}_k $ is%
\begin{equation}
x_k = \bar{x}_k + \overset{k - 1}{\underset{i = 0}{\sum}} \bar{W}_{k-i} w_i.
\label{eq_certain_x_uncertain_x}
\end{equation}%
The substitution of \eqref{eq_certain_x_uncertain_x} in \eqref{eq_opt_infty_norm_lp} yields%
\begin{equation}
\begin{matrix*}[l]
J^*(x_0) =& \underset{x,u}{\min} & \epsilon^x_T + \overset{T - 1}{\underset{k = 0}{\sum}} \epsilon^x_k + \epsilon^u_k\\
&\begin{matrix*}[l]
s.t.\\
\phantom{-}\\
\phantom{\overset{k - 1}{\underset{i = 0}{\sum}}}\\
\phantom{\overset{k - 1}{\underset{i = 0}{\sum}}}\\
\phantom{\overset{k - 1}{\underset{i = 0}{\sum}}}\\
\phantom{-}\\
\phantom{\overset{k - 1}{\underset{i = 0}{\sum}}}\\
\phantom{\overset{k - 1}{\underset{i = 0}{\sum}}}\\
\end{matrix*} 
& 
\begin{matrix*}[l]
\bar{x}_{k+1} = F x_{k} + G u_{k} + h\\
A_k \bar{x}_{k} + B_k u_{k} + c_k + A \overset{k - 1}{\underset{i = 0}{\sum}} \bar{W}_{k-i} w_i \le 0\\
Q_i \bar{x}_k + Q_i \overset{k - 1}{\underset{j = 0}{\sum}} \bar{W}_{k-j} w_j \le \epsilon^x_k\\
- Q_i \bar{x}_k - Q_i \overset{k - 1}{\underset{j = 0}{\sum}} \bar{W}_{k-j} w_j \le \epsilon^x_k\\
R_i u_k \le \epsilon^u_k\\
-R_i u_k \le \epsilon^u_k\\
(P_\infty)_i \bar{x}_T + (P_\infty)_i \overset{T - 1}{\underset{j = 0}{\sum}} \bar{W}_{k-j} w_j \le \epsilon^x_T\\
-(P_\infty)_i \bar{x}_T - (P_\infty)_i \overset{T - 1}{\underset{j = 0}{\sum}} \bar{W}_{k-j} w_j \le \epsilon^x_T\\
\end{matrix*}
\end{matrix*}
\label{eq_opt_infty_norm_lp_2}
\end{equation}%
where all uncertain variables are only present on polytopic constraints.

\begin{theorem}
Let $ p\in \mathds{R}_{+\infty} $, $ p^* \in \mathds{R}_{+\infty} $, $ q \in \mathds{R}^{N} $ and $ x \in \mathcal{X} $ where $ \mathcal{X} = \{ x \mid x \in \mathds{R}^{N}, \;|| x ||_p \le 1 \}$ and $ 1/p + 1/p^* = 1 $ then%
\begin{equation}
q^T x \le \underset{||x||_p \le 1}{\max} q^T x = || q ||_{p^{*}}.
\end{equation}
\label{theorem_complimentary_norm}
\end{theorem}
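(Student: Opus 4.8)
The statement splits into two parts: the inequality $ q^T x \le \max_{||x||_p \le 1} q^T x $ and the identity $ \max_{||x||_p \le 1} q^T x = || q ||_{p^*} $. The first part is immediate, since $ x \in \mathcal{X} $ means $ ||x||_p \le 1 $, so $ q^T x $ is one of the values over which the maximum is taken and therefore cannot exceed it. All of the content lies in the second equality, which is the standard characterization of the dual (conjugate) norm. The plan is to prove this equality by bounding the maximum from above and then exhibiting a feasible point that attains the bound.

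For the upper bound I would invoke H\"older's inequality, $ q^T x \le || q ||_{p^*} \, ||x||_p $, which together with the feasibility constraint $ ||x||_p \le 1 $ gives $ q^T x \le || q ||_{p^*} $ for every admissible $ x $, hence $ \max_{||x||_p \le 1} q^T x \le || q ||_{p^*} $. For the matching lower bound I would construct an explicit maximizer. In the generic range $ 1 < p < \infty $ the natural choice is the componentwise definition $ x_i = \mathrm{sign}(q_i)\,(|q_i|/||q||_{p^*})^{p^*-1} $. The only place the hypothesis $ 1/p + 1/p^* = 1 $ enters is through the conjugate-exponent identity $ p(p^*-1) = p^* $, which makes $ ||x||_p = 1 $ so that $ x $ is feasible; substituting this $ x $ back into $ q^T x $ then collapses the sum to $ || q ||_{p^*} $, meeting the upper bound and closing the argument. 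The degenerate endpoints need separate but simpler witnesses: for $ p = \infty,\, p^* = 1 $ take $ x_i = \mathrm{sign}(q_i) $ to get $ q^T x = \sum_i |q_i| = ||q||_1 $, and for $ p = 1,\, p^* = \infty $ take the signed standard-basis vector aligned with a largest-magnitude entry of $ q $, giving $ q^T x = ||q||_\infty $.

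The routine work is confined to the norm and inner-product evaluations for the candidate maximizer. The main obstacle is essentially bookkeeping: verifying the conjugate-exponent arithmetic that forces $ x $ onto the boundary $ ||x||_p = 1 $, and handling the boundary cases $ p \in \{1, \infty\} $ separately, where the optimizer need not be unique and H\"older degenerates. It is worth noting that the controller formulation in this paper uses the $ l\infty $ case ($ p = \infty $, $ p^* = 1 $), for which the maximizer $ x_i = \mathrm{sign}(q_i) $ delivers the result directly and the full H\"older machinery is not even required.
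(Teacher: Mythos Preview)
Your argument is correct and shares the paper's core step: the upper bound $q^T x \le \|q\|_{p^*}$ via H\"older's inequality together with $\|x\|_p \le 1$. The difference is that the paper's proof stops there and simply asserts the equality $\max_{\|x\|_p \le 1} q^T x = \|q\|_{p^*}$, whereas you actually close the gap by exhibiting explicit maximizers (the signed-power vector for $1<p<\infty$, the sign vector for $p=\infty$, and a signed coordinate vector for $p=1$). In that sense your proof is strictly more complete than the paper's, while remaining fully aligned with its approach; your closing remark that the $p=\infty$ case used downstream in the controller admits the direct witness $x_i=\mathrm{sign}(q_i)$ is also apt.
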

\begin{proof}
Adapted from \cite{boyd2004convex}. If both $ q $ and $ x $ are finite, there is an upper bound to $ q^T x $ given by Hölder's inequality as%
\begin{equation}
q^T x \le ||x||_p ||q||_{p^*}
\end{equation}%
by definition, $ ||x||_p \le 1 $, therefore%
\begin{equation}
q^T x \le ||x||_p ||q||_{p^*} \le ||q||_{p^*}
\label{eq_holders_results}
\end{equation}%
results in an upper bound to the constrained maximization
\begin{equation}
J(q) = \underset{||x||_p \le 1}{\max} q^T x = ||q||_{p^*}.
\end{equation}
\end{proof}

Let $ |M| $ be the element-wise modulus of a given matrix M. Through Theorem \ref{theorem_complimentary_norm}, it follows that
\begin{equation}
M w \le \underset{||w||_\infty \le 1}{\max} M w = | M | \mathds{1}_{N_w}
\end{equation}%
and the robust polytopic constraint from \eqref{eq_opt_infty_norm_lp_2} is
\begin{equation}
A_k \bar{x}_{k} + B_k u_{k} + c_k + \overset{k - 1}{\underset{i = 0}{\sum}} | A_k \bar{W}_{k-i} | \mathds{1}_{N_w} \le 0
\label{eq_robust_polytopic}
\end{equation}%
and analogously%
\begin{equation}
\begin{matrix*}[l]
Q_i \bar{x}_k + \overset{k - 1}{\underset{i = 0}{\sum}} | Q_i \bar{W}_{k-i} | \mathds{1}_{N_w} \le \epsilon^x_k\\
\le - Q_i \bar{x}_k + \overset{k - 1}{\underset{i = 0}{\sum}} | Q_i \bar{W}_{k-i} | \mathds{1}_{N_w} \le \epsilon^x_k\\
(P_\infty)_i \bar{x}_T + \overset{T - 1}{\underset{j = 0}{\sum}} | (P_\infty)_i \bar{W}_{k-j}|  \mathds{1}_{N_w} \le \epsilon^x_T\\
- (P_\infty)_i \bar{x}_T + \overset{T - 1}{\underset{j = 0}{\sum}} | (P_\infty)_i \bar{W}_{k-j}|  \mathds{1}_{N_w} \le \epsilon^x_T.\\
\end{matrix*}
\label{eq_robust_polytopic_2}
\end{equation}

The robust counterpart of the optimization problem from \eqref{eq_opt_infty_norm_lp_2} based on \eqref{eq_robust_polytopic} and \eqref{eq_robust_polytopic_2} is%
\begin{equation}
\begin{matrix*}[l]
\bar{J}^*(x_0) =& \underset{x,u}{\min} & \epsilon^x_T + \overset{T - 1}{\underset{k = 0}{\sum}} \epsilon^x_k + \epsilon^u_k\\
&\begin{matrix*}[l]
s.t.\\
\phantom{-}\\
\phantom{-}\\
\phantom{-}\\
\phantom{-}\\
\phantom{-}\\
\phantom{-}\\
\phantom{-}\\
\end{matrix*} 
& 
\begin{matrix*}[l]
\bar{x}_{k+1} = F x_{k} + G u_{k} + h\\
A_k \bar{x}_{k} + B_k u_{k} + c_k + \Phi_A(k) \le 0\\
Q_i \bar{x}_k + \Phi_Q(k) \le \epsilon^x_k\\
- Q_i \bar{x}_k + \Phi_Q(k) \le \epsilon^x_k\\
R_i u_k \le \epsilon^u_k\\
-R_i u_k \le \epsilon^u_k\\
(P_\infty)_i \bar{x}_T + \Phi_P(T) \le \epsilon^x_T\\
-(P_\infty)_i \bar{x}_T + \Phi_P(T) \le \epsilon^x_T\\
\end{matrix*}
\end{matrix*}
\label{eq_opt_infty_norm_lp_3}
\end{equation}%
such that $ \bar{J}^*(x_0) \ge J^*(x_0) $ and where $ \Phi_A(k) = \overset{k - 1}{\underset{i = 0}{\sum}} | A_k \bar{W}_{k-i} | \mathds{1}_{N_w} $, $ \Phi_Q(k) = \overset{k - 1}{\underset{i = 0}{\sum}} | Q_i \bar{W}_{k-i} | \mathds{1}_{N_w}$ and $ \phi_P(k) = \overset{k - 1}{\underset{j = 0}{\sum}} | (P_\infty)_i \bar{W}_{k-j}| \mathds{1}_{N_w} $.

\subsection{Pre-Stabilizing Nil-Potent Controller}

Due to the disturbance modeled in the system, the formulation presented in \eqref{eq_opt_infty_norm_lp_3} has the following disadvantage: the functional cost $ \underset{T \rightarrow \infty}{\lim} \bar{J}^* \rightarrow \infty $ and the feasible set $ \mathcal{C}_k = \{ x_k,u_k \mid A x_k + B u_k + \bar{c}_k \le 0 \} $ become empty as $ k \rightarrow \infty $ if the open-loop system is unstable or marginally stable. A possible approach for the mitigation of such an effect is the use of a nil-potent controller \cite{bemporad1999robust}.

\begin{definition}
\label{def_nil_potent}
Let $ x_k \in \mathds{R}^{N_x} $ be the states and $ u_k \in \mathds{R}^{N_u} $ the control inputs of a system defined by $ F \in \mathds{R}^{N_x \times N_x} $, $ G \in \mathds{R}^{N_x \times N_u} $, such that $ x_{k+1} = F x_k + G u_k $. A controller $ K_0 \in \mathds{R}^{N_u \times N_x} $ is an $i$-th order nil-potent controller if%
\begin{equation}
(F - G K_0)^n = \mathds{O}_{N_x}\;,\; n \in \mathds{I}, n > i
\end{equation}%
where $ \mathds{O}_{j} $ is a zero column vector of size $ j $.
\end{definition}

Let $ u_k = - K_0 x_k + v_k $ and $ v_k $ be the new control input. The system pre-stabilized by a $n$-th order nil-potent controller is%
\begin{equation}
\bar{x}_{k+1} = (F - G K_0) \bar{x}_k + G v_k + h
\label{eq_nil_potent_system_certain}
\end{equation}%
and, given the results from \eqref{eq_certain_x_uncertain_x}, the system model with disturbances is written as%
\begin{equation}
x_k = \bar{x}_k + \overset{\min(k,n)}{\underset{i = 1}{\sum}} \bar{W}_{i} w_{k-i}
\label{eq_nil_potent_system_uncertain}
\end{equation}%
since%
\begin{equation}
\bar{W}_i = \mathds{O}_{N_x},\; \forall i > n.
\end{equation}%

Based on \eqref{eq_nil_potent_system_certain}, \eqref{eq_nil_potent_system_uncertain} and \eqref{eq_opt_infty_norm_lp_3}, the pre-stabilized robust $ l\infty $-norm optimal control problem, $\bar{J}_{np}^*(x_0)$ is%
\begin{equation}
\begin{matrix*}[l]
\bar{J}_{np}^*(x_0) =& \underset{x,u}{\min} & \epsilon^x_T + \overset{T - 1}{\underset{k = 0}{\sum}} \epsilon^x_k + \epsilon^u_k\\ 
&\begin{matrix*}[l]
s.t.\\
\phantom{-}\\
\phantom{-}\\
\phantom{-}\\
\phantom{-}\\
\phantom{-}\\
\phantom{-}\\
\phantom{-}\\
\end{matrix*} 
& 
\begin{matrix*}[l]
\bar{x}_{k+1} = \widetilde{F} x_{k} + G v_{k} + h\\
\widetilde{A}_k \bar{x}_{k} + B_k v_{k} + c_k + \Phi_A(k) \le 0\\
Q_i \bar{x}_k + \Phi_Q(k) \le \epsilon^x_k\\
- Q_i \bar{x}_k + \Phi_Q(k) \le \epsilon^x_k\\
- R_i K_0 x_k + R_i v_k \le \epsilon^u_k\\
R_i K_0 x_k - R_i v_k \le \epsilon^u_k\\
(P_\infty)_i \bar{x}_T + \Phi_P(T) \le \epsilon^x_T\\
-(P_\infty)_i \bar{x}_T + \Phi_P(T) \le \epsilon^x_T\\
\end{matrix*}
\end{matrix*}
\label{eq_opt_infty_norm_lp_4}
\end{equation}%
where $ \widetilde{F} = F - G K_0 $ and $ \widetilde{A}_k = A_k - B_k K_0 $.

In this new form $ \mathcal{C}_k \ge \mathcal{C}_n, \forall k \in [0,T] $. Therefore, the feasible set converges to a smaller robust set given by $ \mathcal{C}_n $ instead of becoming empty as $ k \rightarrow \infty $.

\section{Controller Validation}
\label{sec_experiments}

This section is divided into two parts. The first describes the simulation framework developed for the validation of the control law and the second reports the simulation results for both robust and nominal controllers.

\begin{figure}[!ht]
\centering
\includegraphics[width=\columnwidth]{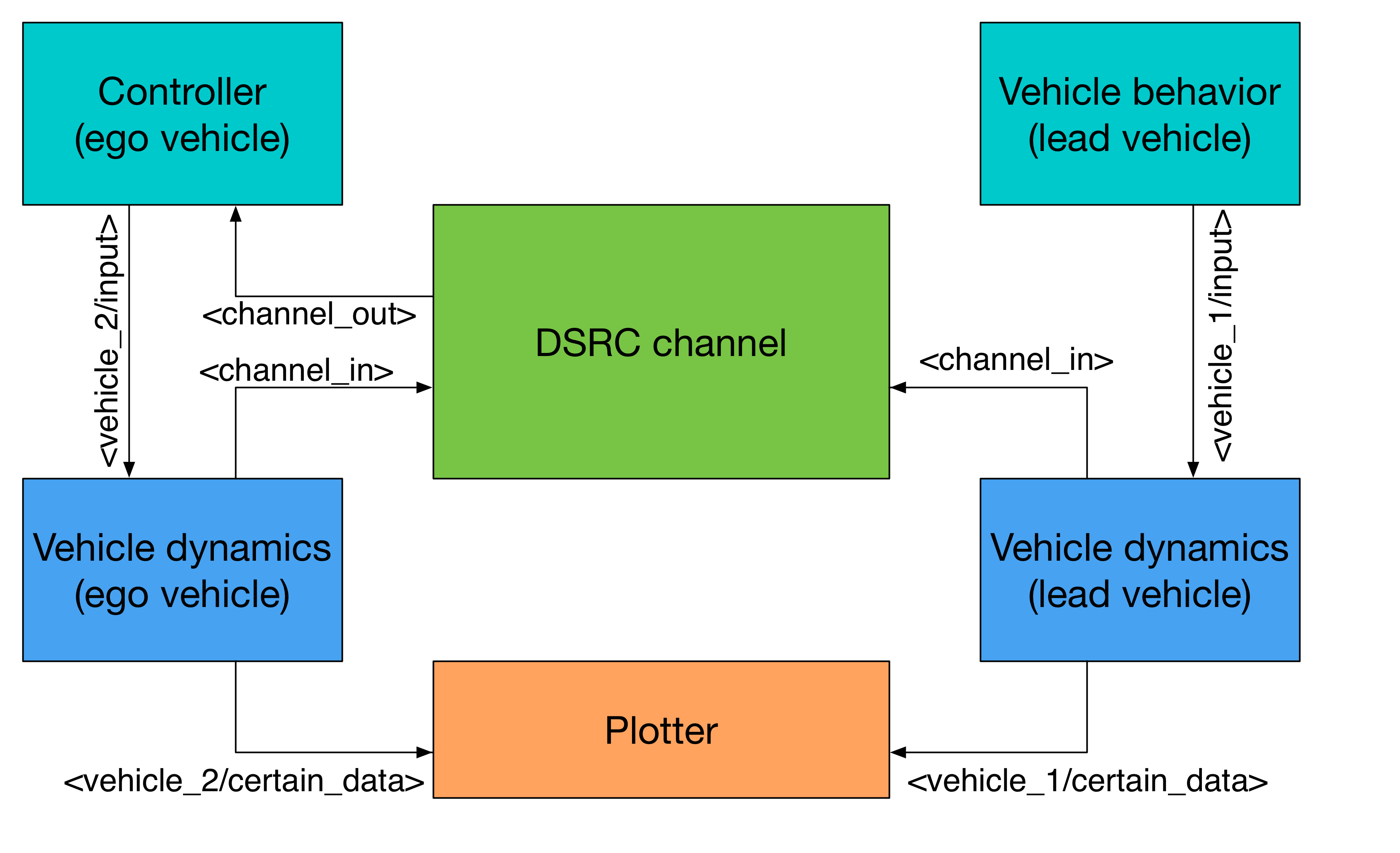}
\caption{Simulation framework architecture}
\label{fig_simulation_arch}
\end{figure}

\setcounter{figure}{5}
\begin{figure}[!b]
\centering
\includegraphics[width=\columnwidth]{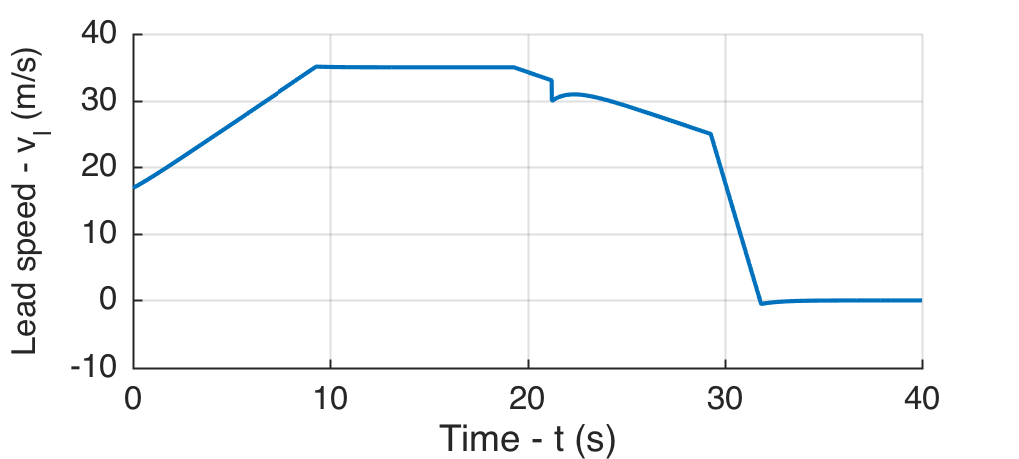}
\caption{Simulated lead vehicle speed profile}
\label{fig_lead_profile}
\end{figure}

\setcounter{figure}{6}
\begin{figure*}[ht]
\centering
\includegraphics[width=\textwidth]{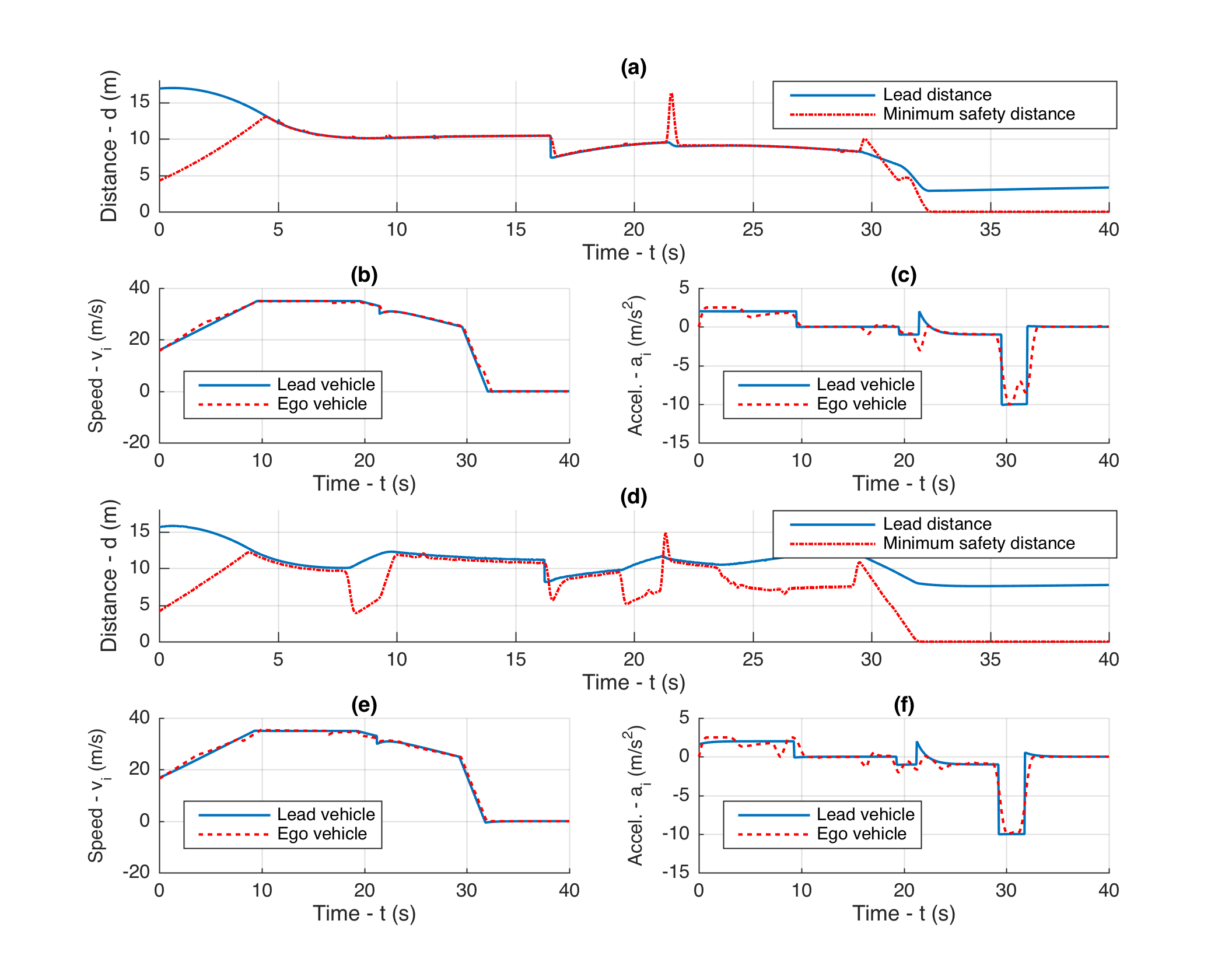}
\caption{Simulation results, (a) and (d) present the distance between both vehicles (blue line) and the minimum safety distance (red dashed line) for the nominal and robust $l\infty$-norm controllers, respectively; (b), (c), (e) and (f) show the lead (blue line) and ego (red dashed line) vehicles speed and acceleration profiles for the nominal and robust controllers, respectively}
\label{fig_simulation_result}
\end{figure*}

\subsection{Simulation Framework}

A simulation framework for multiple communicating vehicles was developed for the validation of the proposed controller based on the Robotic Operating System (ROS) \cite{ros2009}. It consists of five different processes:%
\begin{enumerate}
\item Vehicle dynamics process, which simulates longitudinal and lateral vehicle dynamics and provides noise for state measurements;
\item DSRC communication process, which provides realistic delays and packet losses for V2V communication;
\item Vehicle behavior process, which executes a predefined behavior profile on a vehicle;
\item Controller process, which executes the proposed controller; and
\item Plotter process, which displays the simulation results.
\end{enumerate}

The architecture is shown in Figure \ref{fig_simulation_arch} where blocks represent processes and arrows represent message passing topics between the processes. The simulation assumed a $22ms$ communication delay and a $1\%$ packet loss based on the worst case scenario from \cite{5940479}.

The proposed controller was implemented using CVXGEN \cite{mattingley2012cvxgen}, a code-generation tool for small LP and QP solvers. The LP problem consisted of $80$ optimization variables, $30$ equality constraints and $248$ inequality constraints for a horizon of $10$ time-steps.

\subsection{Simulation results}

A short-term application of CACC systems focus on highway driving scenarios. Therefore, the chosen lead vehicle speed profile, shown in Figure \ref{fig_lead_profile}, starts at $15m/s$ ($54km/h$) performing an on-ramp acceleration of $2m/s^2$ until it reaches $35m/s$ ($126km/h$) at $t=10s$. At this time, it maintains constant speed for $10s$, until $t=20s$. It reduces speed at $-1m/s^2$ until reaching $25m/s$ ($90km/h$) at $t=30s$. At this time, it starts an emergency braking maneuver accelerating at $-10m/s^2$ until it reaches a complete stop at $t=35s$. This profile allows the validation of the proposed controller in both nominal and emergency cases.

Disturbances were added for the investigation of the controller robustness to unmodeled uncertainties. A step of amplitude $-3m$ was added to the distance between the vehicles at $t=17s$ and a step of amplitude $-3m/s$ was added to the lead vehicle speed at $t=22s$. The disturbance amplitudes were chosen to ensure violation of the minimum safety distance for both nominal and robust controllers.

The initial conditions and vehicle parameters used in the simulations are shown in Table \ref{table_simulation}. Weighting matrices $ Q $ and $ R $  are given by%
\begin{equation}
Q = \begin{bmatrix}
100 & 0 & 0\\
0 & 1 & -1
\end{bmatrix},\;\;\;
R = \begin{bmatrix} 
1 
\end{bmatrix}
\end{equation}%
and the uncertainty matrix is given by $ W = \begin{bmatrix} 0 & 1.2 & 0 \end{bmatrix}^T $. The lead vehicle had unbounded jerk while the lead vehicle has actuators modeled by a first-order low pass with time constant $t_c=0.1s$ to ensure worst case performance.

\begin{table}[thpb]
	\centering
	\caption{Initial conditions and vehicle parameters of the simulation}
	\label{table_simulation}
    \renewcommand{\arraystretch}{1.2}
    \begin{tabular}{|lccc|}    
    \hline
    Description  & Symbol  & Value  & Unit \\
    \hline
    Initial distance & $d(0)$ & 15 & $m$ \\
    Initial lead vehicle speed & $v_l(0)$ & 15 & $\frac{m}{s}$ \\
    Initial ego vehicle speed & $v_e(0)$ & 15 & $\frac{m}{s}$ \\
    Maximum allowed speed & $v_{max}$ & 40 & $\frac{m}{s}$ \\
    Maximum acceleration & $a_{max} $ & 2.5 & $\frac{m}{s^2}$ \\
    Minimum acceleration & $a_{min} $ & -2.5 & $\frac{m}{s^2}$ \\
    Minimum time-to-contact & $t_{c,min}$ & 2 & $s$ \\
    Worst-case system delay & $\psi$ & 0.3 & $s$ \\
    Lead vehicle maximum braking & $a_l^b$ & 10 & $\frac{m}{s^2}$ \\
    Ego vehicle maximum braking & $a_e^b$ & 10 & $\frac{m}{s^2}$ \\
    Operation frequency & $-$ & 20 & $Hz$ \\
    \hline
    \end{tabular}
\end{table}

Simulation results for the nominal and robust controllers are shown in Figure \ref{fig_simulation_result}. In Figures \ref{fig_simulation_result}-(a) and \ref{fig_simulation_result}-(d), both controllers the ego vehicle accelerates at the maximum allowed rate in order to catch up and reduce the distance to the vehicle ahead from time $t=0s$ until $t=4s$. Where they keep a constant clearance, close to the minimum safety distance, where the robust controller maintains a higher clearance until $t=10s$.

At $t=10s$, the lead vehicle stops accelerating, so that the ego vehicle keeps a distance close to the minimum until the lead vehicle starts to decelerate at $t=20s$. Notice the performances of both control are similar until $t=17s$, when a distance disturbance was introduced. The robust controller performs better in order to recover a safety distance between the vehicles. When the lead vehicle decelerates at $t=20s$, the robust controller increases the robustness margin due to the prediction of a slower speed ahead. The speed disturbance at time $t=22s$ caused an increase to the minimum safety distance, and both controllers responded in a similar manner. 

The main advantage of the proposed robust approach is observed at the emergency braking maneuver at $t=30s$, where it did not violate the minimum distance unlike the nominal controller. Since its model internalized lead vehicle acceleration uncertainties.


\section{Conclusion}
\label{sec_conclusion}

This paper proposed an analytical formulation of the minimum safe distance between two vehicles and a robust $l\infty$-norm optimal controller for cooperative adaptive cruise control based on such minimum distance. The formulation provides a theoretical lower bound to vehicle clearance inside platoons and was used for the design of a controller capable of internalizing lead vehicle acceleration behavior uncertainties. It guarantees the ego vehicle never collides with its preceding vehicle if the preceding vehicle also does not collide.

Validation simulations included on-ramp accelerations, small velocity variations and emergency braking situations, typical of highway driving. The results demonstrate that the proposed robust controller operates correctly in both nominal and emergency scenarios. It was able to achieve a good distance and speed tracking and even increased the vehicle clearance during breaking to ensure the vehicle safety, resulting in a controller with intuitive behavior and robust to the lead vehicle actions.

Future work on the controller will focus on reductions in acceleration disturbances due to measurement noises.


\bibliographystyle{IEEEtranS}
\bibliography{refs}

\begin{IEEEbiography}[{\includegraphics[width=1in,height=1.25in,clip,keepaspectratio]{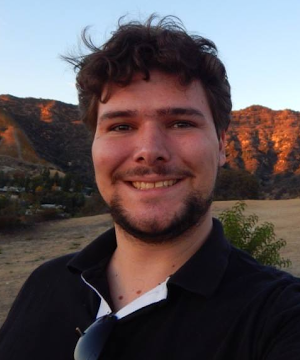}}]{Carlos Massera Filho} is currently working towards his Ph.D. degree at the institute of Mathematics and Computer Science at the University of São Paulo. He received his B.Sc. in Computer Engineering from São Carlos School of Engineer in 2012. He has been working on autonomous and cooperative vehicle control and estimation systems and his current research interests are constrained optimal control, robust optimal control, robust convex optimization and its applications to autonomous and cooperative vehicles.
\end{IEEEbiography}

\begin{IEEEbiography}[{\includegraphics[width=1in,height=1.25in,clip,keepaspectratio]{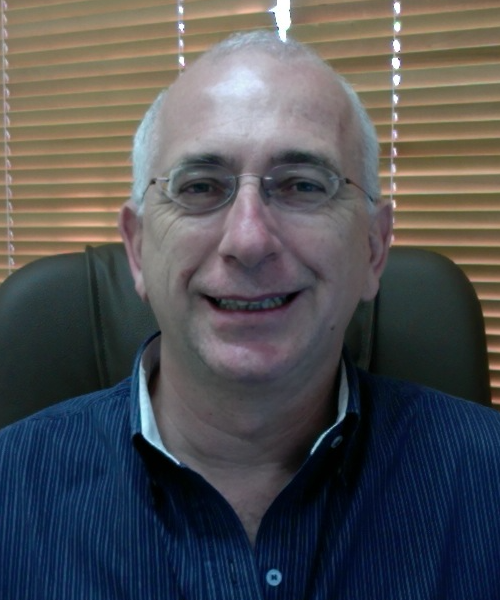}}]{Marco H. Terra}
is Full Professor of Electrical Engineering at University of São Paulo (USP) at São Carlos, Brazil. He received his Ph.D. in Electrical Engineering in 1995 from USP. He has reviewed papers for more than thirty journals and to the Mathematical Reviews of the American Mathematical Society. He is an ad hoc referee for the Research Grants Council (RGB) of Hong Kong. He has published more than 190 conference and journal papers. He is co-author of the book “Robust Control of Robots: Fault Tolerant Approaches” published by Springer. He was coordinator of the robotics committee and president of the Brazilian Automation Society. His research interests include filtering and control theories, fault detection and isolation problems, and robotics.
\end{IEEEbiography}

\begin{IEEEbiography}[{\includegraphics[width=1in,height=1.25in,clip,keepaspectratio]{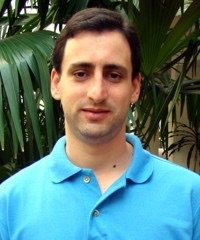}}]{Denis F. Wolf}
is an Associate Professor in the Department of Computer Systems from the University of São Paulo (USP). 
He obtained his PhD degree in Computer Science from the University of Southern California USC in 2006. 
He is the current Director of the Mobile Robotics Laboratory at USP and a member of the Center for Robotics at USP. 
His current research interests include Mobile Robotics, Intelligent Transportation Systems, Machine Learning and Computer Vision.
\end{IEEEbiography}
\end{document}